\def\R{\mathbb{R}}
\def\Z{\mathbb{Z}}
\def\N{\mathbb{N}}
\def\P{\mathcal{P}}
\def\S{\mathcal{S}}
\def\F{\mathcal{F}}
\def\G{\mathcal{G}}
\def\Gm{\Gamma}
\def\l{\ell}
\def\z{\xi}
\def\g{\gamma}
\def\s{\sigma}
\def\T{\mathcal{T}}
\begin{document}



\section{Introduction}

The basic idea behind exchangeability is to remove the independence assumption from an independent and identically distributed (iid) sample, while preserving the same marginals and the symmetry properties that make these objects so convenient to deal with. The symmetric property is easily seen to be equivalent to the notion of invariance under permutation of indices. These distributions have been extensively studied and were first shown in 1930 by Bruno de Finetti, in a special instance in \cite{finetti}, to be equivalent to the mixing of iid samples, and later in 1937 for a more general case in \cite{finetti2}. Refer to \cite{aldous} for a good probabilistic introduction to the subject of exchangeability. This kind of symmetry also plays a role in the philosophical foundation of the Bayesian paradigm.

Let $\xi=(\xi_n)_{n\geq 1}$ be a finite or infinite random sequence defined on a probability space $(\Omega, \F, P)$ and taking values in a standard Borel space $(S,\mathcal{S})$. The sequence $\z$ is said to be \textit{exchangeable} if for every positive integer $n$
\begin{gather*} \label{exchangeable}
(\xi_{\pi(1)},\dots,\xi_{\pi(n)})\stackrel{d}{=}(\xi_{1},\dots,\xi_{n}),
\end{gather*}
for all $\pi \in \mathbb{S}_n$, where $\mathbb{S}_n$ denotes the set of permutations of $\{1,...,n\}$.  The seminal de-Finneti Theorem asserts that any infinite exchangeable sequence can be decomposed to a convex combination of iid sequences. Formally speaking, let us denote by $\mathcal{P}(\mathcal{S})$ the set of probability measures on $(S,\mathcal{S})$. Equip $\mathcal{P}(\mathcal{S})$ with the $\s$-field $\mathcal{A}$, which is generated by the mappings $\kappa \mapsto \kappa(B)$ from $\mathcal{P}(\mathcal{S}) \to [0,1]$, where $B \in \mathcal{S}$.  Then, de-Finneti's Theorem assures for any infinite  exchangeable sequence $\z=(\z_n)_{n=1}^{\infty}$ the existence of a unique probability measure $\nu$ on $(\mathcal{P}(\mathcal{S}),\mathcal{A})$, so that 
 \begin{equation*}
 P\left(\z_1 \in A_1, ... , \z_n \in A_n \right) = \int \kappa(A_1) \cdots \kappa(A_n) \, \nu(d\kappa), \	\	\	\ \forall n\in \N, 		\	\forall A_1,...,A_n \in \mathcal{S}.
\end{equation*}  
The probability measure $\nu$ is called the \textit{de-Finneti measure} of $\z$. 

 {To simplify matters, let us first consider a finite or infinite real-valued exchangeable sequence $\xi$. Denote $\mathcal{F}_n$  the $\s$-field generated by  $S_k=\sum_{i=1}^k \xi_i$, $k=n,n+1,\dots$.} We have by symmetry that
\begin{gather*}
E(\xi_{\l}\,|\,\mathcal{F}_{n+1})
\end{gather*}
is almost surely the same random variable for all $\l \leq n+1$. In particular, we get
\begin{gather*}
E(n^{-1}S_{n}|\mathcal{F}_{n+1})=E((n+1)^{-1}S_{n+1}|\mathcal{F}_{n+1})=(n+1)^{-1}S_{n+1},
\end{gather*}
thus implying that
\begin{gather*}
\left(n^{-1}S_n, \: \mathcal{F}_n\right)
\end{gather*}
is a reverse martingale. In fact, one can associate a stronger reverse-martingale property with exchangeable sequences  {which take values in a standard Borel space $(S,\mathcal{S})$ }. For any positive integer $n$ denote by $\eta_n=n^{-1}\sum_{i=1}^n \delta_{\xi_i}$  the $n$'th \textit{empirical distribution} of a random (finite or infinite) sequence $\z$, and let $\T_n$ be the $\s$-field generated by $\eta_n,\eta_{n+1},...$. 

\begin{definition}[Reverse, measure-valued martingale]
The sequence of empirical distributions $\eta:=(\eta_n)_{n\geq 1}$ is said to be a \textit{reverse, measure-valued martingale}
if for every bounded and measurable $f: S\to\mathbb{R}$ it holds that
\begin{align}\label{empiricrelat}
E(\eta_nf|\mathcal{T}_{n+1})=\eta_{n+1}f,\quad \forall n \geq 1,
\end{align}
where $\eta_n f := \int_S f(s) \eta_n (ds) = n^{-1} \sum_{i=1}^n f(\z_i)$. 
\end{definition}

 {Because for} exchangeable sequences $\z$, the prefix $(\z_1,...,\z_n)$ remains exchangeable over $\T_n$ for every $n$,  {similarly to the real-valued case,
one can easily derive the following result.} 

\begin{theorem}\label{reversemartingale.disc}
Let $\xi$ be a finite or infinite exchangeable random sequence with empirical distributions $\eta$. Then $\eta$ forms a reverse, measure-valued martingale.
\end{theorem}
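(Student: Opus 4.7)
The plan is to formalise the calculation already outlined in the prose preceding the theorem, which for the scalar statistic $S_n/n$ is effectively complete. Since an exchangeable sequence remains exchangeable upon applying a bounded measurable $f:S\to\mathbb{R}$ to each coordinate, it suffices to establish
\[
E(\eta_n f \mid \mathcal{G}_{n+1}) = \eta_{n+1} f \quad \text{a.s.}
\]
for every such $f$, with the filtration $\mathcal{G}_n := \sigma(\eta_n, \xi_{n+1}, \xi_{n+2}, \ldots)$. This filtration is decreasing in $n$, renders each $\eta_n$ measurable, and establishing the above identity for every bounded measurable $f$ is precisely what it means for $\eta$ to be a reverse measure-valued martingale.

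The key step is the symmetry identity
\[
E(f(\xi_k) \mid \mathcal{G}_{n+1}) = E(f(\xi_1) \mid \mathcal{G}_{n+1}) \quad \text{a.s., for every } k \leq n+1.
\]
To see this, note that any $A \in \mathcal{G}_{n+1}$ has $\mathbf{1}_A = g(\eta_{n+1}, \xi_{n+2}, \xi_{n+3}, \ldots)$ for some measurable $g$. Since $\eta_{n+1}$ is a symmetric function of $\xi_1, \ldots, \xi_{n+1}$ and $\xi_{n+2}, \xi_{n+3}, \ldots$ are untouched by any permutation of the first $n+1$ indices, applying exchangeability to the transposition $(1\;k)$ yields $E(f(\xi_k) \mathbf{1}_A) = E(f(\xi_1) \mathbf{1}_A)$, and the claim follows. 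Averaging the identity over $k = 1, \ldots, n+1$ identifies the common conditional expectation as $\eta_{n+1} f$, and summing only over $k = 1, \ldots, n$ and then dividing by $n$ delivers the desired reverse martingale property. The finite case with $\xi$ of length $N$ is handled by restricting $n < N$ in the same argument.

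The main obstacle is essentially bookkeeping: one must track the permutation-invariance of $\mathcal{G}_{n+1}$ carefully in order to upgrade equality in joint distribution (coming from exchangeability) to almost-sure equality of conditional expectations. Once the symmetry identity is in hand, the conclusion is a one-line averaging computation, and the passage from the scalar reverse-martingale statement for each bounded measurable $f$ to the measure-valued formulation is purely by definition.
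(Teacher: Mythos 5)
Your proposal is correct and follows essentially the same route as the paper, which proves this result via the prose argument preceding the theorem statement: condition on the decreasing tail $\sigma$-field generated by the empirical distributions (equivalently $\sigma(\eta_{n+1},\xi_{n+2},\dots)$, the same filtration the paper uses as $\mathcal{T}_k=\sigma(\theta_{k-1}\eta)$), use exchangeability to show $E(f(\xi_k)\mid\mathcal{T}_{n+1})$ is the same for all $k\leq n+1$, and average. Your added care in justifying the symmetry identity via transpositions and the Doob--Dynkin representation of $\mathcal{T}_{n+1}$-measurable indicators is sound and merely makes explicit what the paper leaves implicit.
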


 {In Theorem 2.4 of \cite{olav2}, Kallenberg states that the result of Theorem \ref{reversemartingale.disc} is not only necessary, but also sufficient for exchangeability to hold. Formally, Theorem 2.4 in \cite{olav2} states the following:} 
\begin{theorem}\label{MainThm}
Let $\z$ be a finite or infinite random sequence with empirical distributions $\eta$. Then $\z$ is exchangeable if and only if the $\eta$ form a reverse, measure-valued martingale.
\end{theorem}

 {Kallenberg's proposed proof of sufficiency turned out to be incomplete, and not mendable along the original lines. Thus, the present contribution has the principal aim of providing the first complete proof of Theorem \ref{MainThm}, using different techniques than that of Kallenberg.
}

The paper is organized as follows. In Section \ref{Sec2} we give a short proof to the case where $\z$ is a binary sequence using a Markov chain approach. This proof gives rise to the notion of two-coloring exchangeable process which we introduce and discuss in Subsection \ref{Subsec2.2}. Section \ref{Sec3} is devoted to the proof of Theorem \ref{MainThm} in the general case and builds on a discretization argument coupled with combinatorial arguments. We end the paper with Section \ref{Sec4}, in which we give a wider perspective to the main result of the paper. This section, which can be read independently right after this introduction, surveys recent developments in the theory of exchangeable sequences which are closely connected to the current work. In Appendix \ref{AppA} we discuss the original proof of Kallenberg and point to its incompleteness.

\section{Binary Case and Two-Coloring Exchangeability}\label{Sec2}
\subsection{The Binary Case}

\begin{theorem}\label{BinaryThm}
Let $\z$ be a finite or infinite sequence of binary random variables with empirical distributions $\eta_1,\eta_2,...$. Then $\z$ is exchangeable if and only if the $\eta_n$ form a reverse, measure-valued martingale.
\end{theorem}

In preparation for the proof of the above reuslt, we first establish some preliminary lemmas. Let  $Y_{n}:=\sum_{i=1}^{n} \xi_{i}, n\geq 1$, be the sequence of  partial sums of $\xi$.  

\begin{lemma}\label{Bl1}
$\xi$ is exchangeable if and only if for all $n$ and $y \in\{0, \ldots, n\}$
\begin{equation}\label{Beq1}
P\left(\xi_{1}=x_{1}, \ldots, \xi_{n}=x_{n} \mid Y_{n}= {y}\right)=\mathbf{1}_{\{ {y}\}}\left(x_{1}+\cdots+x_{n}\right)\binom{n}{ {y}}^{-1}.
\end{equation}
\end{lemma}
Further, as an elementary consequence we have

\begin{lemma}\label{Bl2}
For every $n \in \N$, the sequence $(\z_1,...\z_n)$ is exchangeable if and only if $Y=\left(Y_{1}, \ldots, Y_{n}\right)$ satisfies
\begin{equation}\label{EqTranProb}
\begin{split}
P\left(Y_{\l}=z \mid  Y_{\l+1}=y_{\l+1},...,Y_n=y_n\right)& = P\left(Y_{\l}=z \mid Y_{\l+1}=y_{\l+1}\right)\\
& = \begin{cases}\frac{y_{\l+1}}{\l+1} & \text { if }\: z=y_{\l+1}-1, \\ \frac{\l+1-y_{\l+1}}{\l+1} & \text { if } \: z=y_{\l+1}, \:\: 	\	\	\	\	\	\	\	\	\	\	\	\	\forall\l \leq n.\\ 0 & \text { otherwise }\end{cases}
\end{split}
\end{equation}
\end{lemma} Said differently, Lemma \ref{Bl2} asserts that $(\z_1,...\z_n)$ is exchangeable if and only if $Y=\left(Y_{1}, \ldots, Y_{n}\right)$ forms a reverse Markov chain with transition probabilities given by (\ref{EqTranProb}).
\begin{proof}[Proof of Lemma \emph{\ref{Bl2}}]
If $\z$ is exchangeable, this is a simple consequence of (\ref{Beq1}). If $Y$ is a reverse Markov chain with these transition probabilities, multiplying them together and using that $\xi_{i}=Y_{i}-Y_{i-1}$ yields
\begin{equation*}
\begin{split}
& P\left(\z_{1}=x_{1}, \ldots, \z_{n}=x_{n} \mid Y_{n}=y_n\right)= \\
& P\left(Y_{1}=x_{1}, \ldots, Y_{n}-Y_{n-1}=x_{n} \mid Y_{n}=y_n\right)= \\
& P\left(Y_{1}=x_{1} \mid Y_{2}=x_{2}+x_{1}\right) P\left(Y_{2}=x_{2}+x_{1} \mid Y_{3}=x_{1}+x_{2}+x_{3}\right) \cdot \\ 
& ... \cdot P\left(Y_{n-1}=x_{1}+\cdots x_{n-1} \mid Y_{n}=y_n\right)=\mathbf{1}_{\{y_n\}}\left(x_{1}+\cdots+x_{n}\right) \binom{n}{y_n}^{-1} {.}
\end{split}
\end{equation*}
By Lemma \ref{Bl1}, this implies that the sequence $\z$ is exchangeable. Note that the  Markov chain $ {\left(Y_{1}^{\prime}, Y_{2}^{\prime}, \ldots, Y_{n}^{\prime} \right)= \left(Y_{n}, Y_{n-1} \ldots, Y_{1} \right)}$ exactly describes sampling without replacement from an urn with $n$ balls, $y_n$ of which are white.
 \end{proof}
 
 \begin{proof}[Proof of Theorem \emph{\ref{BinaryThm}}]
 {As already discussed in the introduction we only need to show sufficiency. For this purpose,}  fix some $n \in \N$, and introduce for each $1 \leq \l \leq n$ the quantity
\begin{equation*}
\theta_{\l}(y_{\l+1},...,y_n)=P\left(Y_{\l}=y_{\l+1}-1 \mid Y_{\l+1}=y_{\l+1},\ldots, Y_{n}=y_{n}\right).
\end{equation*}
By the definition of $Y$ we have that
\begin{equation}\label{Beq2}
\begin{split}
E\left(Y_{\l} \mid Y_{\l+1}=y_{\l+1}, \ldots, Y_{n}=y_{n}\right) & = (y_{\l+1}-1)\theta_{\l}(y_{\l+1},...,y_n) + y_{\l+1}(1-\theta_{\l}(y_{\l+1},...,y_n))\\
& = y_{\l+1} - \theta_{\l}(y_{\l+1},...,y_n).
\end{split}
\end{equation}
On the other hand, since the $\z_{\l}$'s are binary, the empirical distributions $\eta_{\l}$ can be identified with $Y_{\l}/\l$ for each $1\leq \l \leq n$. Hence, by the reverse martingale assumption, for every such $\l$ we have
\begin{equation}\label{Beq3}
E\left(Y_{\l} \mid Y_{\l+1}=y_{\l+1},\ldots, Y_{n}=y_{n}\right) = \left(\frac{\l}{\l+1}\right)y_{\l+1}.
\end{equation}
Combining Eqs.\ (\ref{Beq2}) and (\ref{Beq3}) and yields
\begin{equation*}
\theta_{\l}(y_{\l+1},...,y_n) = y_{\l+1} - \left(\frac{\l}{\l+1}\right)y_{\l+1} = \frac{y_{\l+1}}{\l+1},  		\	\	 1\leq \l \leq n.
\end{equation*}
Hence, in view of Lemma \ref{Bl2}, we obtain that $(\z_1,...,\z_n)$ is exchangeable, as desired. 
\end{proof}

\subsection{Two-Coloring Exchangeability}\label{Subsec2.2}
Let us now introduce the notion of \emph{two-coloring exchangeable} sequences.

\begin{definition}
A finite or an infinite random sequence $\z = (\z_n)_{n \geq 1}$ taking values in a standard Borel space $(S,\mathcal{S})$ is said to be \emph{two-coloring exchangeable} if for every  {measurable} $f:S \to \{0,1\}$ the binary sequence $(f(\z_n))_{n \geq 1}$ is exchangeable.
\end{definition}
In words, a sequence is \emph{two-coloring exchangeable} if for any coloring of $S$ with two colors, the colored sequence must be exchangeable. 

\begin{proposition}\label{Obs1}
If the sequence of empirical distributions $(\eta_n)_{n \geq 1}$ of an underlying random sequence $\z = (\z_n)_{n \geq 1}$ taking values in a standard Borel space $S$ is a reverse, measure-valued martingale then $\z$ is two-coloring exchangeable. 
\end{proposition}

\begin{proof}[Proof of Proposition \emph{\ref{Obs1}}.]
Take some $f : S \to \{0,1\}$ and denote for each $n$ the $\sigma$-field $\G^f_n := \sigma (\{  \frac{1}{\l} \sum_{i=1}^{\l} f(\z_i); \l \geq n \})$. Since the empirical distributions $(\eta_n)$ of $(\z_n)$  determine the sequence of empirical distributions of the sequence $(f(\z_n))$ we have for every $n \in \N$ that
\begin{equation*}
\begin{split}
E \left( \frac{1}{n} \sum_{i=1}^n f(\z_i) \,\bigg|\, \G^f_{n+1}\right) & = E  \left( E  \left(  \eta_n f \, \bigg|\, \T_{n+1} \right) \,\bigg|\, \G^f_{n+1}\right)\\
& = E  \left( \eta_{n+1}f \,\bigg|\, \G^f_{n+1}\right)  = \frac{1}{n+1} \sum_{i=1}^{n+1} f(\z_i).
\end{split}
\end{equation*}
Since $f$ is binary-valued, the above relation establishes that the reverse martingale property holds for the empirical distribution of $(f(\z_n))_{n\geq 1}$. Thus $(f(\z_n))$ must be exchangeable by Theorem \ref{BinaryThm}. 
\end{proof}

\begin{question}\label{Q1}
Is every finite or infinite two-coloring exchangeable sequence exchangeable?
\end{question}

 {To the best of our knowledge, Question \ref{Q1} has not yet been introduced in the literature.  Although the answer to this question remains an open problem, let us now describe an interesting connection with the well-known marginal problem (e.g., Strassen \cite{Strassen}).} 

Assume that $(\z_n)_{n=1}^{\infty}$ is an infinite two-coloring exchangeable sequence taking values in the finite set $\{1,...,d\}$ for some integer $d \geq 3$. Consider for each $i \in \{1,...,d\}$ the binary function $f_i$ which assigns $1$ to $i$, and $0$ otherwise. For each such $i$, denote by  $\mu_i$ the de-Finneti measure of the exchangeable sequence $(f_i(\z_n))_{n=1}^{\infty}$. The measure $\mu_i$ is a probability measure on the unit interval $[0,1]$. For the sake of the discussion, assume that $(\z_n)_{n=1}^{\infty}$ is also exchangeable. Then, its de-Finneti's measure, denoted $\mu$, is a probability measure on the $d$-dimenstional simplex, denoted $\Delta_d$, and defined by
\begin{equation*}
\Delta_d := \{ x \in \R^d\,:\, \sum_{i=1}^d x_i =1, \, x_i \geq 0\,\,  \forall i \}.
\end{equation*}
The marginals of the measure $\mu$ (i.e., the projected measures to each coordinate of $\R^d$) are exactly the measures $\mu_i$, $i=1,...,d$. Thus, the measure $\mu$ is solution to the marginal problem on $\R^d$ with marginals $\mu_i$ and domain $\Delta_d$. Therefore,  we obtain that for finite valued infinite sequences $(\z_n)_{n=1}^{\infty}$ a necessary condition for a positive answer to Question \ref{Q1}, is that the marginal problem on $\R^d$ with marginals $\mu_i$ and domain $\Delta_d$ admits a solution. An equivalent formulation of this necessary condition can be written as follows (e.g., Theorem 3.4 in \cite{Edwards}):
\begin{gather*} \forall f_1,...,f_d \in \mathscr{C}([0,1])  \,\, \text{s.t.} \,\, \sum_{i=1}^d f_i(x_i)\geq 0 \,\,\, \forall (x_1,...x_d) \in  \Delta_d\\
\text{we have} \,\,\, \sum_{i=1}^d \int\limits_0^1 f_i d\mu_i \geq 0,
\end{gather*}
where $\mathscr{C}([0,1])$ denotes the space of continuous functions on the unit interval. 

\section{General Case}\label{Sec3}

Notice that exchangeability is defined in terms of finite-dimensional distributions. Additionaly, if the sequence of empirical distributions $(\eta_{\l})_{\l=1}^{\infty}$ of an infinite sequence $(\z_{\l})_{\l=1}^{\infty}$ forms a reverse, measure valued martingale, it follows easily from the tower property of conditional expectations that the empirical distributions $(\eta_{\l})_{\l=1}^{n}$ of the sequence  $(\z_{\l})_{\l=1}^{n}$ also form a reverse, measure valued martingale for every $n \geq 1$. Thus, it suffices to prove Theorem \ref{MainThm} for finite sequences $(\z_1,...,\z_n)$.

The first step towards proving Theorem \ref{MainThm} is to establish the following discrete version of it. 

\begin{proposition}\label{ThmA1}
Let $\z =(\z_1,...,\z_m)$ be a discrete-valued random sequence with empirical distributions $\eta_1,...,\eta_m$. Then $\z$ is exchangeable  {if} the sequence $(\eta_n)$ forms a reverse, measure-valued martingale.
\end{proposition}

\begin{proof}[Proof of Proposition \emph{\ref{ThmA1}}]
We begin with the introduction of notation and definitions. Let $\Gm$ be a countable alphabet so that $\z_n \in \Gm$ for every $n$.  {For every $n$, the set $\Gm^n$  corresponds to all words of length $n$ over the alphabet $\Gamma$. Denote $\mathcal{H}:=\bigcup_{n\leq m} \Gamma^n$ the set of all words of length at most $m$. Let $\mu$ be the probability measure induced on $\Gamma^m$ (equipped with the discrete $\s$-algebra) by the process $(\z_1, ..., \z_m)$. That is,
\begin{equation*}
\mu\left((h_1,...,h_m)\right) = P\left(\{\z_1=h_1,...,\z_m=h_m\}\right),  ~ \forall (h_1, ... , h_m) \in \Gamma^m.
\end{equation*}}
For every two words  {$h \in \Gamma^n$ and $w \in \Gamma^{\l}$}, $n+\l \leq m$, denote by $hw$ the word obtained from the concatenation of $h$ and $w$, i.e., $hw = (h_1,...,h_n,w_1,...,w_{\l}) \in  {\Gamma^{n+\l}}$. Consider the set $X \subset (\N \cup \{0\})^{\Gm}$, defined by
\begin{equation*}
X = \{(x_{\g})_{\g \in \Gm} \in (\N \cup \{0\})^{\Gm}\,:\, \sum_{\g \in \Gm} x_{\g} \leq m\}.
\end{equation*}
The set $X$ can be thought of as the set of counting measures on $\Gm$ having finite support, whose total measure is at most $m$. For each $x \in X$ let $s(x)=\{\g \in \Gm\,:\, x_{\g}>0 \}$ be the support of $x$. Denote for every $\g \in \Gm$ by $e_{\g}$ the sequence in $X$ that assigns $1$ to $\g$ and $0$ for all other letters. For each  {$x = (x_{\g})_{\g \in \Gm} \in X$} consider
\begin{equation*}
H[x] = \{h \in  {\mathcal{H}} \,:\, \g\,\, \text{appears}\,\, x_{\g}\,\,  \text{times in}\,\, h, \, \forall \g \in \Gm \}.
\end{equation*}
A simple combinatorial argument implies that for every  {$x = (x_{\g})_{\g \in \Gm} \in X$},
\begin{equation}\label{Com}
|H[x]| = \frac{\left(\sum_{\g \in s(x)} x_{\g}\right)!}{\prod_{\g \in s(x)} x_{\g}!}.
\end{equation}
Lastly, for every $n < \l < m$ positive integers let $\z_n^{\l} := (\z_n,\z_{n+1},...,\z_{\l})$. Our strategy to prove Proposition \ref{ThmA1} will be to prove by induction the following statement. For every $n \leq m$ and permutation $\pi \in \mathbb{S}_n$ it holds
\begin{equation*}\label{Suf1}
(\z_1,...,\z_n,\z_{n+1},...,\z_m) \,{\buildrel d \over =}\, (\z_{\pi(1)},...,\z_{\pi(n)},\z_{n+1},...,\z_m).
\end{equation*}
The base case $n=1$ follows immediately. Let us show the induction step $n-1 \Rightarrow n$. Consider the event $\left\lbrace \eta_n =  {n^{-1}}\sum_{\g \in \Gm} k_{\g} \delta_{\g}, \, \z_{n+1}^{m} = w \right\rbrace$ for some $w \in  {\Gamma^{m-n}}$ and some non-negative integers $(k_{\g})_{\g \in \Gm}$, and assume that it has positive probability. The sequence $x:= (k_{\g})_{\g \in \Gm}$ lies in $X$. By the induction step we have
\begin{equation}\label{IndSt}
\mu(v\g w) = \mu(v'\g w)\, 	\	\	\ \, \forall \g \in s(x), \, \forall v,v' \in H[x-e_{\g}].
\end{equation} 
Therefore, if we can show that for any two $\g^* \neq \g^{**} \in s(x)$ and any two histories $h$ and $h'$ in $H[x-e_{\g^*}]$ and $H[x-e_{\g^{**}}]$, respectively we have that $\mu(h\g^* w) = \mu(h'\g^{**} w)$, we will obtain that for every $\pi \in \mathbb{S}_n$ we have
\begin{equation*}
(\z_1,...,\z_n,\z_{n+1},...\z_{m}) \,{\buildrel d \over =}\, (\z_{\pi(1)},...,\z_{\pi(n)},\z_{n+1},...,\z_{m}),
\end{equation*}
as desired. To show the required relation, fix $\g^* \neq \g^{**} \in s(x)$ and two histories $h$ and $h'$ in $H[x-e_{\g^*}]$ and $H[x-e_{\g^{**}}]$, respectively. We need the following two claims:
\begin{claim}\label{O1}
$\s(\eta_n, \z_{n+1}^{m}) = \T_n.$
\end{claim}
\begin{proof}[Proof of Claim \emph{\ref{O1}}]
Follows from the fact that  $\l \eta_{\l} - (\l-1)\eta_{\l-1} = \delta_{\z_{\l}}$ for every $\l \in \N$.  
\end{proof}
\begin{claim}\label{O2}
For every bounded function $f: \Gm \to \R$ we have $E (f(\z_n)\,|\, \T_n) = \eta_n f$. 
\end{claim}
\begin{proof}[Proof of Claim \emph{\ref{O2}}]
Follows from the relation $f(\z_n) = n\eta_n f - (n-1) \eta_{n-1} f$ and the reverse-martingale property of $(\eta_n)$. 
\end{proof}
By Claims \ref{O1} and \ref{O2} we infer that for every bounded $f: \Gm \to \R$ we have $$E (f(\z_n)\,|\,\eta_n, \z_{n+1}^{m}) = \eta_n f.$$ Consider the function $f(\g)=I\{\g= \g^*\}$.  {On the one hand for each $\omega \in \lbrace \eta_n = n^{-1}\sum_{\g \in \Gm} k_{\g} \delta_{\g}, \, \z_{n+1}^{m} = w \rbrace$,  the reverse martingale property implies that
\begin{equation}\label{EqM1}
\begin{split}
E \left(f(\z_n)\,\Bigg|\, \left\lbrace \eta_n =n^{-1} \sum_{\g \in \Gm} k_{\g} \delta_{\g}, \, \z_{n+1}^{m} = w \right\rbrace \right) & = E \left(f(\z_n)\,\Bigg|\, \eta_n, \z_{n+1}^{m} \right)(\omega) \\
& = \frac{1}{n} \sum_{i=1}^n I\{\z_i= \g^*\}(\omega)
= \frac{k_{\g^*}}{n}.
\end{split}
\end{equation}}
On the other hand,  {by the earlier definition $x := (k_{\g})_{\g \in \Gamma}$,  utilizing  (\ref{IndSt}) and (\ref{Com}) we obtain}
\begin{multline*}
E \left(f(\z_n)\,\Bigg|\, \left\lbrace \eta_n =  {n^{-1}} \sum_{\g \in \Gm} k_{\g} \delta_{\g}, \, \z_{n+1}^{m} = w \right\rbrace \right)\\
 = \frac{\sum_{ v \in H[x-e_{\g^*}]} \mu(v \g^* w)}{P\left(\left\lbrace \eta_n =  {n^{-1}}\sum_{\g \in \Gm} k_{\g} \delta_{\g}, \, \z_{n+1}^{m} = w \right\rbrace \right)}\\
= \frac{(n-1)!}{(k_{\g^*}-1)! \prod_{\g \in s(x)/\{\g^*\}} k_{\g}! } \cdot \frac{\mu(h\g^*w)}{P\left(\left\lbrace \eta_n =  {n^{-1}}\sum_{\g \in \Gm} k_{\g} \delta_{\g}, \, \z_{n+1}^{m} = w \right\rbrace \right)},
\end{multline*}
which in turn with (\ref{EqM1}) implies 
\begin{equation*}
\mu(h\g^*w)  = \frac{\prod_{\g \in s(x)} k_{\g}! }{n!}P\left(\left\lbrace \eta_n =  {n^{-1}}\sum_{\g \in \Gm} k_{\g} \delta_{\g}, \, \z_{n+1}^{m} = w \right\rbrace \right).
\end{equation*}
By performing a parallel analysis with $f(\g) = I\{\g = \g^{**}\}$ we obtain by the same arguments that
\begin{equation*}
\mu(h'\g^{**}w)  = \frac{\prod_{\g \in s(x)} k_{\g}! }{n!}P\left(\left\lbrace \eta_n =  {n^{-1}} \sum_{\g \in \Gm} k_{\g} \delta_{\g}, \, \z_{n+1}^{m} = w \right\rbrace \right)
\end{equation*}
as well. Therefore, we have shown that $\mu(h\g^*w) = \mu(h'\g^{**}w)$ as required. 
\end{proof}

We may now proceed with the proof of Theorem \ref{MainThm}.

\begin{proof}[Proof of Theorem \emph{\ref{MainThm}}]
 {From the introductory remarks, we may concentrate on proving sufficiency. Thus assume that $\eta$ forms a reverse, measure-valued martingale.} By the Borel Isomorphism Theorem (cf. p.\ 49-50 in Aldous \cite{aldous}) it suffices to consider the case $S = \mathbb{R}$, as the extension of the result to any standard Borel space follows immediately. Let $\mu$ be the probability measure induced by $(\z_1,...,\z_m)$ on $(\R^{m},\mathcal{B}(\R^{m}))$. For each $d \in \N$, consider the step function $g^d:\R \to \R$ given by $$g^d(t)= \sum_{\l \in \Z} \frac{\l}{2^d}\, I\bigg\{t \in \Big( \frac{\l-1}{2^d} , \frac{\l}{2^d} \Big]\bigg\}.$$ Define the discrete-valued sequence of random variables $Y^d = (Y_1^d,...,Y_m^d)$ by $Y^d_n := g^d(\z_n)$. Let $(\eta^d_1,...,\eta_m^d)$ be the sequence of empirical measures of the sequence $Y^d$, and denote $\T^d_n = \s(\{\eta^d_i\,:\, i \geq n\})$. Since the sequence $(\eta^d_n)$ is adapted to the sequence $(\eta_n)$, and the latter is a reverse, measure-valued martingale, we have for every bounded and measurable $f : \R \to \R$ and any positive integer $n < m$ that
\begin{equation*}
\begin{split}
E  \left( \eta^d_n f\,\bigg|\, \T^d_{n+1}\right) &= E \left( E \left(\eta_n (f\circ g^d)\, \bigg|\, \T_{n+1} \right) \,\bigg|\, \T^d_{n+1}\right)\\
& =  E  \left( \eta_{n+1} (f\circ g^d) \,\bigg|\, \T^d_{n+1}\right)\\
& = E  \left( \eta^d_{n+1} f \,\bigg|\, \T^d_{n+1}\right) = \eta^d_{n+1} f.
\end{split}
\end{equation*}
Thus, the sequence $(\eta^d_n)_{n\geq 1}$ also forms a reverse, measure-valued martingale. Hence, by  {Proposition} \ref{ThmA1} we obtain that $Y^d$ is exchangeable for any $d \in \N$.

 The latter yields that the sequence $(\z_1,...,\z_m)$ is exchangeable on dyadic cubes\footnote{A \emph{dyadic cube} $Q \subset \mathbb{R}^m$ is a set of the form $\displaystyle{ \frac{z}{2^d} + \Big( 0,\frac{1}{2^d} \Big]^m}$ for some  $d \in \mathbb{N}$, $z \in \mathbb{Z}^m$.}. Indeed for every $d \in \mathbb{N}$ and any permutation $\pi \in \mathbb{S}_m$ we get
\begin{eqnarray} \label{Dyadic EX}
\mu \bigg(\Big( \frac{\l_1-1}{2^d} , \frac{\l_1}{2^d} \Big] ,...,\Big( \frac{\l_m-1}{2^d} , \frac{\l_m}{2^d} \Big] \bigg) & = &P \bigg(Y_1^d = \frac{\l_1}{2^d},...,Y_m^d = \frac{\l_m}{2^d} \bigg) \nonumber	\\
& = & P \bigg(Y_1^d  = \frac{\l_{\pi(1)}}{2^d},...,Y_m^d = \frac{\l_{\pi(m)}}{2^d} \bigg)\\
& = & \mu \bigg( \Big( \frac{\l_{\pi(1)}-1}{2^d} , \frac{\l_{\pi(1)}}{2^d} \Big] ,...,\Big( \frac{\l_{\pi(m)}-1}{2^d} , \frac{\l_{\pi(m)}}{2^d} \Big] \bigg). \nonumber
\end{eqnarray}
Since $\mu$ is a regular measure, and as any open set in $\mathbb{R}^m$ can be decomposed into a disjoint union of dyadic cubes, we have the following identity:
\begin{equation} \label{Outer rep}
\mu(B) = \inf \Bigg \lbrace \sum\limits_{k=1}^{\infty} \mu(Q_k); \, Q_k \subset \mathbb{R}^m \, \text{are disjoint dyadic cubes and}\, B \subset \bigcup_k Q_k  \Bigg \rbrace.
\end{equation}
Take some $\pi \in \mathbb{S}_m$ and define the map $\pi: \mathbb{R}^m \to \mathbb{R}^m$ by
$\pi(x_1,...,x_m) := (x_{\pi(1)},...,x_{\pi(m)})$. By Eq.\ (\ref{Dyadic EX}) we have that for each dyadic cube $Q \subset \mathbb{R}^n$,
\begin{equation}\label{Dyadic EX Short}
\mu(Q) = \mu(\pi(Q)).
\end{equation}
Take some events $A_1,...,A_m \in \mathcal{B}(\R)$ and consider $A_1\times ...\times A_m$. Since $\pi$ is a bijection of $\R^m$, if $A_1\times ...\times A_m \subset \bigcup_k Q_k$ for some disjoint dyadic cubes $Q_k$, then $(\pi(Q_k))_k$ are also disjoint dyadic cubes and $$A_{\pi(1)}\times ...\times A_{\pi(m)}  = \pi(A_1\times ...\times A_m) \subset \bigcup_k \pi(Q_k).$$
The latter observation, together with Eqs.\ (\ref{Outer rep}) and (\ref{Dyadic EX Short}) yields that
\begin{equation*}
\mu\left(A_1\times...\times A_m \right) = \mu\left(A_{\pi(1)}\times...\times A_{\pi(m)} \right).
\end{equation*}
As $\pi \in \mathbb{S}_m$ was arbitrary we have shown that $(\z_1,...,\z_m)$ is exchangeable.
\end{proof}

We end the paper with a perspective, which in our view, places the main result of the current paper in a broader context, involving random probability measures, martingales, and exchangeability.

\section{Exchangeability, Random Probability Measures and Martingales}\label{Sec4}
Consider a probability space $(\Omega,\F,P)$ and a standard Borel space $(S,\S)$. Denote by $\P(\S)$ the space of all Borel probability measures on $(S,\S)$. Equip $\P(\S)$ with the $\s$-field $\mathcal{A}$ generated by the functions $\kappa \to \kappa(B)$, $\kappa \in \P(\S)$,  where $B$ ranges over $\S$. A random probability measure (r.p.m) $\nu$ is a measurable mapping  from $(\Omega,\F)$ to $(\P(\S),\mathcal{A})$. Thus, for each $\omega \in \Omega$, $\nu(\omega)$ is a Borel probability measure on $(S,\S)$. Throughout the paper we studied the r.p.m.'s $(\eta_n)_{n\geq 1}$ corresponding to the empirical measures induced by a sequence of random variables $(\z_n)_{n\geq 1}$ defined on  $(\Omega,\F,P)$, and taking values in $S$. 

Another fundamental family of r.p.m.'s associated with the sequence $(\z_n)_{n\geq 1}$ is that of the predictive distributions $(p_n)_{n\geq 1}$ of  $(\z_n)_{n\geq 1}$, given by
\begin{equation*}
(p_n(\omega))(B):= P \left(\z_{n} \in B\,|\, \z_1,...,\z_{n-1} \right) (\omega), 	\	\	 	\	\forall B \in \S, 
\end{equation*}
where for each $n$, we may take $P \left(\z_{n} \in \cdot \,|\, \z_1,...,\z_{n-1} \right)$ to be a regular conditional probability. The pioneering work of Kallenberg \cite{kallenberg1988spreading} studied the case where the sequence of r.p.m.'s $(p_n)_{n\geq 1}$ satisfies the measure-valued martingale condition. Formally, it means that for every bounded and measurable $f:S \to \R$, the sequence of random variables $(p_n f)(\omega) := \int f(x)[p_n(\omega)](dx)$, $\omega \in \Omega$, forms a martingale. It follows easily that whenever $(\z_n)_{n\geq 1}$ is exchangeable, $(p_n)_{n\geq 1}$ forms a measure-valued martingale. However, unlike in the case where we impose the \textbf{reverse martingale} condition on $(\eta_n)_{n\geq 1}$, the mere martingale assumption on $(p_n)_{n\geq 1}$ need not be sufficient for exchangeability (see examples 1.1, 1.2, and 1.3 in Berti, Pratelli, and Rigo \cite{Berti}). Nevertheless, Kallenberg was able to show the following criterion for exchangeability:
\begin{theorem}[Kallenberg, \cite{kallenberg1988spreading}]  
The infinite sequence  $(\z_n)_{n=1}^{\infty}$ is exchangeable if and only if it is stationary and $(p_n)_{n=1}^{\infty}$ forms a measure-valued martingale.
\end{theorem} 
Recently, the stationarity assumption played a key role in yet another characterization of infinite exchangeable sequences in terms of the r.p.m.'s $(p_n)_{n=1}^{\infty}$. Lehrer and Shaiderman (see Theorem 6 in \cite{lehrer2020exchangeable}) showed that exchangeability of infinite sequences is equivalent to stationarity and the strong permutation invariant posteriors (strong PIP) property, where the later is given by the condition that
\begin{equation*}
P \left(\z_{n} \in \cdot \,|\, \z_1 \in A_1,...,\z_{n-1}\in A_{n-1} \right) = P \left(\z_{n} \in \cdot \,|\, \z_1 \in A_{\pi(1)},...,\z_{n-1}\in A_{\pi(n-1)} \right),
\end{equation*}
for every $n \in \N$, $A_1,...,A_{n-1} \in \S$, and any permutation $\pi$ of $\{1,...,n-1\}$,  provided that $P\left(\lbrace \z_1 \in A_{\pi(1)},...,\z_{n-1}\in A_{\pi(n-1)} \rbrace \right)>0$ for any such $\pi$.

The martingale property of $(p_n)_{n=1}^{\infty}$ turned out to be important in its own right. Berti, Pratelli, and Rigo \cite{Berti} showed that infinite random sequences $(\z_n)_{n=1}^{\infty}$  for whom $(p_n)$ forms a measure-valued martingale (also known as the c.i.d. condition) posses a rich probabilistic structure, admitting strong laws and central limit type theorems. In particular, they generalized such known results on infinite exchangeable sequence to a wider class, that  {of} c.i.d.\ sequences.

A recent part of the literature on exchangeable sequences is devoted to the proximity of the r.p.m.'s $\eta_n$ and $p_n$ for large values of $n$ (see \cite{Berti17} for a survey of methodologies and new results). To give an example of such proximity result, assume that $S=\R^d$ for some $d\in \N$, and denote by $\mathbb{B}_d$ the set of closed euclidean balls in $\R^d$. Berti, Pratelli, and Rigo \cite{Berti18} showed that (e.g., Corollary 3 in \cite{Berti18}) if $(\z_n)_{n\geq 1}$ is exchangeable then
\begin{equation*}
r_n \sup_{B \in \mathbb{B}_d} |\eta_n(B) - p_n(B) | \to 0\, 	\	\ \text{a.s.}  \ 	\	\	\text{whenever} 	\	\		\ r_n \sqrt{\frac{\log \log n}{n}} \to 0.
\end{equation*}

\begin{acks}
The authors wish to thank Steffen L. Lauritzen and Ehud Lehrer for their contribution to this work. Ehud is also acknowledged for highlighting the connection between the two-coloring exchangeability property and the marginal problem discussed in Subsection \ref{Subsec2.2}.  {Finally, the authors wish to thank an anonymous reviewer for their careful reading and comments which significantly improved the content of the paper.} 
\end{acks}

\appendix 

\section{Kallenberg's Argument}\label{AppA}
As mentioned in the introduction, a proof of the Theorem \ref{MainThm} was given in \cite{olav2}, using a different proof technique. However, the proof appears to not be complete. To be more specific, consider a finite random sequence $(\z_1,...,\z_n)$ for whom $(\eta_{\l},\T_{\l})_{\l \leq n}$ forms a reverse, measure valued martingale. Then, by (\ref{empiricrelat}) for bounded, measurable $f$ on $S$, 
\begin{gather}\label{reversecrux}
E(f(\xi_{k})|\mathcal{T}_k)=k\eta_k f-(k-1)E(\eta_{k-1}f|\mathcal{T}_k)=\eta_k f.
\end{gather}
Define for each $1\leq k \leq n$,
\begin{gather*}
\zeta_k:=\xi_{n-k+1},\:\:\: \beta_k:=\sum_{j\leq k}\delta_{\zeta_j}=n\eta_n-(n-k)\eta_{n-k},\:\:\: \mathcal{F}_k:=\mathcal{T}_{n-k}.
\end{gather*}
The sequence $\zeta=(\zeta_1,...\zeta_n)$ is $\mathcal{F}$-adapted and by (\ref{reversecrux}) we have for every $1\leq k<n$ that,
\begin{gather*}\label{marginalurncruxrev}
E(f(\zeta_{k+1})|\beta_n,\mathcal{F}_k)=E(f(\xi_{n-k})|\mathcal{T}_{n-k})=\eta_{n-k}f=(n-k)^{-1}(\beta_n-\beta_k)f,
\end{gather*}
where the first equality follows from $\beta_n = n\eta_n \in \T_{n-k}$ for every $k<n$. The latter gives us that $\zeta$ satisfies the requirements of the definition of a \textit{conditional urn-sequence} as defined in p.\ 71 in \cite{olav2}. Kallenberg then deduces his reverse martingale characterization by  utilizing Proposition 2.2 in \cite{olav2}, which argues that every adapted conditional urn-sequence must be exchangeable.

The problem thus lies in the part of the proof of Proposition 2.2, where it is argued that any finite, $\mathcal{F}$-adapted conditional urn sequence $\z=(\z_1,...,\z_n)$ is also exchangeable. To avoid confusion, in the current discussion  $\mathcal{F}$ is a general filtration, and not the one defined in terms of the $\T_k$'s before. Thus, we know that for any $k<n$ it holds that  
\begin{gather}\label{conditionalurn}
P(\xi_{k+1}\in\cdot|\beta_n,\mathcal{F}_k)=\frac{\beta_n-\beta_k}{n-k}, \:\:\:k<n,
\end{gather}
where $\beta_{\l}:=\sum_{j\leq \l}\delta_{\xi_j}$ for every $1\leq \l \leq n$.

To show that $\z=(\z_1,...,\z_n)$ is exchangeable it suffices to argue that for any $1\leq k < n$ the expression
\begin{equation}\label{TargetedSymmetry}
E(f_{k+1}(\xi_{k+1})\cdots f_n(\xi_{n})|\mathcal{F}_k,\beta_n)
\end{equation}
is symmetric in the non-negative measurable functions $f_{k+1},...,f_{n}$. For the latter, Kallenberg makes use of \emph{factorial measures}, defined on p.30 in \cite{olav2}. This measure, associates with every finite counting measure $\nu = \sum_{i=1}^{r} \delta_{s_i}$, $s_i \in S$, a measure on $S^r$, denoted $\nu^{(r)}$ and called the factorial measure of $\nu$, which is defined by
\begin{equation*}
\nu^{(r)}(B) = \sum_{\pi \in \mathbb{S}_r} I((s_{\pi(1)},...,s_{\pi(r)}) \in B), \:\: B \in \mathcal{S}^r.
\end{equation*}
In particular, the factorial measure is a symmetric measure on $S^r$, in the sense that:
\begin{equation}
\nu^{(r)}(A_1 \times ... \times A_r) = \nu^{(r)}(A_{\pi(1)}\times ... \times A_{\pi(r)}),	\		\	\forall \pi \in \mathbb{S}_r, 	\	\	\  A_1,...,A_r \in \mathcal{S}.
\end{equation}

Using the notion of factorial measures, Kallenberg then argues that the symmetry in the $f_i$'s in Eq.\ ({\ref{TargetedSymmetry}}) follows from the following calculation:
\begin{align*}\label{equivdefcalc}
E(f_{k+1}(\xi_{k+1})\cdots f_n(\xi_{n})|\mathcal{F}_k,\beta_n)&=E(f_{k+1}(\xi_{k+1})\cdots f_{n-1}(\xi_{n-1})E(f_n(\xi_{n})|\mathcal{F}_{n-1},\beta_n)|\mathcal{F}_k,\beta_n) \nonumber\nonumber\\
&= \frac{\beta_n-\beta_{n-1}}{n-n+1}f_n\: E(f_{k+1}(\xi_{k+1})\cdots f_{n-1}(\xi_{n-1})|\mathcal{F}_{k+1},\beta_n) \nonumber \\
&=\cdots=\prod_{j=k+1}^n\frac{(\beta_n-\beta_{j-1})}{n-j+1} f_{j}\\
&=\frac{(\beta_n-\beta_k)^{(n-k)}}{(n-k)!}\bigotimes_{j=k+1}^n f_j.\nonumber
\end{align*}
However, we believe that there are two inaccuracies in that calculation. First,  in the second equality the measurability of $\beta_{n-1}$ with respect to $\mathcal{F}_k,\beta_n$ was implied, which in general can not be assumed. Secondly, the last equality above need not hold in general (try with $k=0$, $n=2$). To the best of our knowledge those two inaccuracies cannot be easily mended. 

It is also worth mentioning that the incompleteness of the proof of Proposition 2.2 in \cite{olav2} just discussed, has repercussions on the proof of Theorem 2.3 in \cite{olav2} and also of Theorem 2.2 in \cite{olav2}. The proof of the latter theorem is very easily mendable, along the original lines, and hence omitted in the present work.

\end{document}